\title{The relationship between some special conditions of young functions and the validity of generalized $L^p$ estimate for the Poisson equations in a unit ball}
\author{Tianxiao Hu\footnote{Peking University, E-mail: txhu@pku.edu.cn}}
\date{}
\begin{document}
\maketitle
\doublespacing
\newtheorem{definition}{Definition}[section]
\newtheorem{theorem}[definition]{Theorem}
\newtheorem{remark}[definition]{Remark}
\newtheorem{lemma}{Lemma}[subsection]

\abstract{In this paper, we consider the domain is in $B_1$, and we will show the relationship between  the global $\Delta_2$ and $\nabla_2$ conditions of young functions and the validity of generalized $L^p$ estimate for the Poisson equations.}

\section{INTRODUCTION}
In the 20th century, Sobolev spaces were commonly used in a wide variety of fields of pdes. Since the 1960s, many spaces of functions wider than Sobolev spaces were created for meeting the needs of various practical problems. Orlicz spaces (see Definition (1.5))have been studied as the generalization of Sobolev spaces since they were introduced by Orlicz \cite{Orlicz}. The theory of Orlicz spaces plays a crucial role in many fields of mathematics including geometric, probability, stochastic, Fourier analysis and partial differential equations (see \cite{rao2002applications}). 

In this paper, we are concerned about the following the Dirichlet problem  in a unit ball:
\begin{equation}
\begin{cases}  \label{Poisson} 
 -\Delta u=f  \quad \quad   in  \quad B_1, \\ 

u |_{\partial B_1}=0.
\end{cases}
\end{equation}

where the dimension $n \geq 2$. Our main purpose is to study what are the optimal conditions on those Young functions $\phi$ that satisfy the estimate
\begin{equation} \label{estimate}
\int_{B_1} \phi\left(\left|D^2 u\right|\right) dx \leq C \int_{B_1} \phi(|f|) d x,
\end{equation}
for all pairs $(u, f)$ satisfying Poisson equation \eqref{Poisson} where $C$ is a positive constant independent of $u$ and $f$.
Indeed, if $\phi(t)=|t|^p$, \eqref{estimate} is reduced to the classical $L^p$ estimate (see \cite{chen1998second,gilbarg1977elliptic} where  the domain is $\mathbb{R}^n$ ). That's the reason why we call it `generalized $L^p$ estimate' in the title.

In the case where the domain is $\mathbb{R}^n$, there are other previous related works.  Wang \cite{wang2003geometric} gave a new proof of local $L^p$ estimates for the Poisson and heat equation by a geometric approach, in which the Hardy-Littlewood maximal function, modified Vitali covering lemma, and compactness method are used. By employing the same techniques as in \cite{wang2003geometric}, Jia, Li and Wang \cite{jia2007regularity} generalized local estimates in $L^p$ space to Orlicz spaces for the Poisson equation when $\phi \in \Delta_2 \cap \nabla_2$ (see Definition (\ref{Young1}) and (\ref{Young2})). Since $\phi$ is not certain to be a polynomial, which leads to the failure of the normalization, the authors in \cite{jia2007regularity} first assume that $u \in C_0^{\infty}(\Omega)$ and then use an interpolation inequality to obtain the result. Acerbi and Mingione \cite{acerbi2007gradient} obtained local $L^q, q \geq p$, gradient estimates for the degenerate parabolic $p$-Laplacian systems. There they invent a new iteration-covering approach, which is completely free from harmonic analysis. Wang, Yao, Zhou and Jia \cite{wang2009optimal} simplified the iteration-covering procedure used in \cite{acerbi2007gradient} and extended it to the whole space.

In this paper, we use the following notations. Let $B_r=\left\{y \in \mathbb{R}^n:|y|<r\right\}$ be an open ball in $\mathbb{R}^n$ with center 0 and radius $r>0$, and $B_r(x)=B_r+x$. We denote
$$
\left|D^2 u\right|=\sum_{i, j=1, \ldots, n}\left|D_{x_i x_j} u\right|
\quad and\quad 
\left\|D^2 u\right\|_{L^p\left(\Omega\right)}=\sum_{i, j=1, \ldots, n}\left\|D_{x_i x_j} u\right\|_{L^p\left(\Omega\right)},
$$
where
$$
\left\|D_{x_i x_j} u\right\|_{L^p\left(\Omega\right)}=\left(\int_{\Omega}\left|D_{x_i x_j} u\right|^p d x\right)^{1 / p} \text { for } p>1 .
$$
In addition, we denote the set
$$
\Phi=\left\{\phi:[0,+\infty) \longrightarrow[0,+\infty)\quad |\quad \phi \quad is \quad increasing \quad and \quad convex\right\}.   
$$

\begin{definition} 
A function $\phi \in \Phi$ is said to be a Young function if
$$
\lim _{t \rightarrow 0+} \frac{\phi(t)}{t}=\lim _{t \rightarrow+\infty} \frac{t}{\phi(t)}=0.
$$
\end{definition}

\begin{definition}
A Young function $\phi \in \Phi$ is said to satisfy the global $\nabla_2$ condition, denoted by $\phi \in \nabla_2$, if there exists a number $a>1$ such that for every $t>0$,
$$
\phi(t) \leq \frac{\phi(a t)}{2 a} .
$$
\end{definition}

We can verify that if $\phi \in \nabla_2$, then $\phi$ satisfies for $0<\theta_1 \leq 1$,
\begin{equation}  \label{Young1}
\phi\left(\theta_1 t\right) \leq 2 a \theta_1^{\alpha_2} \phi(t),
\end{equation}
where $\alpha_2=\log _a 2+1 (> 1)$.

\begin{definition}
A Young function $\phi \in \Phi$ is said to satisfy the global $\Delta_2$ condition, denoted by $\phi \in \Delta_2$, if there exists a positive constant $K$ such that for every $t>0$,
$$
\phi(2 t) \leq K \phi(t) .
$$
Obviously, $K \geq 2$. Also if $K=2$, $\phi$ is linear so it cannot satisfy the global $\nabla_2$ condition. 
\end{definition}

It is easy to check if $\phi \in \Delta_2$,  for $1 \leq \theta_2<\infty$ $\phi$ satisfies
\begin{equation} \label{Young2}
\phi\left(\theta_2 t\right) \leq K \theta_2^{\alpha_1} \phi(t),
\end{equation}
where $\alpha_1=\log _2 K (\geq 1)$. When young function $\phi$ satisfy the global $\nabla_2$ and $\Delta_2$ condition at the same time, $K>2$ and $\alpha_1>1$.

\begin{remark}
    The global $\Delta_2 \cap \nabla_2$ condition makes the function grow moderately. For example, $\phi(t)=|t|^\alpha(1+|\log | t||)$ for $\alpha>1$ satisfies the global $\Delta_2 \cap \nabla_2$ condition.
\end{remark}

\begin{definition} 
 Let $\phi$ be a Young function. Then the Orlicz class $K^\phi\left( B_1\right)$ is the set of all measurable functions $g: B_1 \rightarrow \mathbb{R}$ satisfying
$$
\int_{B_1} \phi(|g|) d x<\infty .
$$
The Orlicz space $L^\phi\left( B_1\right)$ is the linear hull of $K^\phi\left( B_1\right)$.
\end{definition}

\begin{remark} \label{dense property}
    In general, $K^\phi \subset L^\phi$. However, if $\phi$ satisfies the global $\Delta_2$ condition, then $K^\phi=L^\phi$ and $C_0^{\infty}$ is dense in $L^\phi$ (see \cite{adams2003sobolev}). 
\end{remark}

\begin{remark} \label{another integral form}
    If $g \in L^\phi\left( B_1\right)$, then $\int_{ B_1} \phi(|g|) d x$ can be rewritten in an integral form as
\begin{equation}
\int_{ B_1}\phi(|g|) d x=\int_0^{\infty}\left|\left\{x \in  B_1:|g|>\lambda\right\}\right| d[\phi(\lambda)] .
\end{equation}
\end{remark}

Now let us state the main result of this work:

\begin{theorem} \label{main result}
Assume that $\phi \in \Phi$. 

(1) If estimate \eqref{estimate} holds for every pair $(u, f) \in$ $C_0^{\infty}\left(B_1\right) \times C_0^{\infty}\left(B_1\right)$ satisfying the Dirichlet problem \eqref{Poisson} and $D^2 u \in L^\phi\left(B_1\right)$, then $\phi \in \Delta_2 \cap \nabla_2$. 

(2) On the contrary, if $\phi \in \Delta_2 \cap \nabla_2$, then for every $f \in L^\phi\left(B_1\right)$, there is a solution $u \in W_{l o c}^{2,1}\left(B_1\right)$ and $u |_{\partial B_1}=0$, satisfying estimate \eqref{estimate}.
\end{theorem}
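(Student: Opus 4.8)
\emph{Part (2), sufficiency.} The plan is to combine classical Calder\'on--Zygmund theory with a real-variable truncation argument. First I would record the standard $L^p$ solvability of \eqref{Poisson} in $B_1$: for each $p\in(1,\infty)$ and each $f\in L^p(B_1)$ there is a unique $u\in W^{2,p}(B_1)\cap W^{1,p}_0(B_1)$ with $\|D^2u\|_{L^p(B_1)}\le C(n,p)\|f\|_{L^p(B_1)}$; write $Tf:=D^2u$. Next I would unwind $\phi\in\Delta_2\cap\nabla_2$: by \eqref{Young1} and \eqref{Young2}, $\phi(\theta t)\le 2a\,\theta^{\alpha_2}\phi(t)$ for $0<\theta\le1$ and $\phi(\theta t)\le K\theta^{\alpha_1}\phi(t)$ for $\theta\ge1$ (with $\alpha_2>1$), and from convexity together with $\Delta_2$, $\lambda\phi'(\lambda)\le(K-1)\phi(\lambda)$; in particular $L^\phi(B_1)\subset L^{\alpha_2}(B_1)$, so the $L^{\alpha_2}$-solution $u$ exists and already lies in $W^{2,\alpha_2}(B_1)\subset W^{2,1}_{loc}(B_1)$ with zero boundary values. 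The heart of the matter is the level-set bound: fixing $\lambda>0$, split $f=f\mathbf 1_{\{|f|\le\lambda\}}+f\mathbf 1_{\{|f|>\lambda\}}$, solve separately, add the solutions (using uniqueness of $L^{\alpha_2}$-solutions), and estimate
\[
\bigl|\{|D^2u|>2\lambda\}\bigr|\le\bigl|\{|T(f\mathbf 1_{\{|f|\le\lambda\}})|>\lambda\}\bigr|+\bigl|\{|T(f\mathbf 1_{\{|f|>\lambda\}})|>\lambda\}\bigr|
\]
by Chebyshev, using the $L^{q_0}$-bound ($q_0>\alpha_1$) on the first piece and the $L^{p_0}$-bound ($1<p_0<\alpha_2$) on the second. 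Inserting this into $\int_{B_1}\phi(|D^2u|)\,dx=\int_0^\infty|\{|D^2u|>\lambda\}|\,d[\phi(\lambda)]$ (Remark~\ref{another integral form}), rescaling $\lambda$ and absorbing the factor through $\Delta_2$, interchanging the order of integration, and using the two power bounds together with $\lambda\phi'(\lambda)\lesssim\phi(\lambda)$, one obtains $\int_{B_1}\phi(|D^2u|)\,dx\le C\int_{B_1}\phi(|f|)\,dx$.

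\emph{Part (1), necessity.} I would first note that the admissible pairs are exactly $(u,-\Delta u)$ with $u\in C_0^\infty(B_1)$ (the boundary condition and $D^2u\in L^\phi$ being automatic), and that the dilation $\tilde u(y)=u(x_0+ry)/r^2$ leaves $|D^2u|$ and $|\Delta u|$ pointwise unchanged (only relocating them), so \eqref{estimate} holds with the same $C$ for every $u\in C_0^\infty(B_r(x_0))$, $B_r(x_0)\subset B_1$; thus test functions may be freely translated and dilated. To get $\phi\in\Delta_2$ I would test \eqref{estimate} on $u=\lambda\,x_1x_2\bigl(-\log(|x|^2+\delta^2)\bigr)\eta(|x|)\in C_0^\infty(B_1)$, $\eta$ a fixed cutoff with $\eta\equiv1$ on $B_{1/2}$. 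A direct computation gives $|\Delta u|\le C_n\lambda$ on all of $B_1$, uniformly in $\delta$, while $|D^2u|\ge|D_{12}u|\ge\lambda\log(1/\delta)$ on $B_\delta$; feeding this into \eqref{estimate} yields $\delta^n\phi(\lambda\log\tfrac1\delta)\le C\phi(C_n\lambda)$ for all $\lambda>0$, and the choice $\delta=e^{-2C_n}$ (so that $\lambda\log\tfrac1\delta=2(C_n\lambda)$), with $\lambda$ ranging over $(0,\infty)$, gives $\phi(2s)\le K\phi(s)$ for all $s>0$, i.e.\ $\phi\in\Delta_2$.

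To get $\phi\in\nabla_2$ I would exploit the endpoint failure of the $L^1$ estimate. With $\Gamma$ the Newtonian potential, $\rho_\epsilon$ a unit-mass mollifier at scale $\epsilon$ about an interior point, and $\eta$ a cutoff with $\eta\equiv1$ on $B_{3/4}$, the function $u_\epsilon:=\eta\,(\Gamma*\rho_\epsilon)$ lies in $C_0^\infty(B_1)$, with $-\Delta u_\epsilon=\eta\rho_\epsilon+O(1)\mathbf 1_{B_1\setminus B_{3/4}}$, while $|D^2u_\epsilon|\ge|\partial_{12}u_\epsilon|\gtrsim|x|^{-n}$ on a fixed fraction of $\{10\epsilon<|x|<\tfrac14\}$ (since $|\partial_{12}\Gamma(x)|\asymp|x_1x_2|/|x|^{n+2}$). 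Testing \eqref{estimate} on $\lambda u_\epsilon$, writing both sides as one-dimensional integrals, and substituting $s=c\lambda|x|^{-n}$ reduces the estimate, after absorbing the harmless boundary term, to $\int_A^B\phi(s)s^{-2}\,ds\le C\,\phi(C'B)/B$ for all $0<A<B$ with $B/A$ large; the freedom in $\lambda$ and $\epsilon$ makes $A,B$ range over all such pairs, so this captures $\phi$ at every scale. After $s=e^u$ this reads: the non-decreasing function $\Psi(u):=\phi(e^u)/e^u$ satisfies $\int_u^v\Psi\le C_0\Psi(v+L_1)$ whenever $v-u\ge L_0$. Finally, if $\Psi(u_*+NL_0+L_1)<2\Psi(u_*)$ then $NL_0\Psi(u_*)\le\int_{u_*}^{u_*+NL_0}\Psi\le C_0\Psi(u_*+NL_0+L_1)<2C_0\Psi(u_*)$, forcing $NL_0<2C_0$; taking $N>2C_0/L_0$ gives $\Psi(u+NL_0+L_1)\ge2\Psi(u)$ for all $u$, i.e.\ $\phi(at)\ge2a\phi(t)$ for all $t>0$ with $a:=e^{NL_0+L_1}>1$, which is exactly $\phi\in\nabla_2$.

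\emph{Main obstacle.} The real work is the necessity direction: one must carry out the two Hessian/Laplacian computations carefully --- in particular verifying that $|\Delta u|$ stays bounded \emph{uniformly in $\delta$} in the first construction (the singular second derivatives of $-\log(|x|^2+\delta^2)$ being damped by the factor $x_1x_2$), and controlling the mollification error $(\partial_{12}\Gamma)*\rho_\epsilon-\partial_{12}\Gamma$ away from the origin in the second --- and one must justify cleanly that amplitude scaling together with $\epsilon\to0$ really probes $\phi$ on \emph{every} multiplicative range, so that the resulting $\Delta_2$ and $\nabla_2$ are global (this rules out, e.g., $\phi$'s that fail the conditions only near $0$). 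A secondary, routine point is to make the ``add the two solutions'' step in Part~(2) rigorous, which rests on uniqueness of $L^{\alpha_2}$-solutions and on the memberships $T(f\mathbf 1_{\{|f|>\lambda\}})\in L^{p_0}$, $T(f\mathbf 1_{\{|f|\le\lambda\}})\in L^{q_0}$.
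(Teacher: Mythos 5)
Your approach is genuinely different from the paper's in both directions, and I think it is sound. For sufficiency (part (2)), the paper uses the Acerbi--Mingione/Wang-style iteration-covering method: after rescaling to $u_\lambda=u/(E\lambda)$ it runs a Vitali covering lemma on the level sets of $|D^2u_\lambda|$, compares with a harmonic replacement on each ball via the $W^{2,\infty}_{\mathrm{loc}}$ bound, and closes through a lemma of the form $\int_0^\infty \mu^{-p}\bigl(\int_{\{|g|>b_1\mu\}}|g|^p\bigr)d[\phi(b_2\mu)]\lesssim\int\phi(|g|)$ with $p<\alpha_2$; you instead do a single level-split $f=f\mathbf 1_{\{|f|\le\lambda\}}+f\mathbf 1_{\{|f|>\lambda\}}$ and use Calder\'on--Zygmund boundedness on $L^{q_0}$ ($q_0>\alpha_1$) and $L^{p_0}$ ($p_0<\alpha_2$) directly. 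This is a cleaner ``Marcinkiewicz-style'' argument specific to the linear operator $f\mapsto D^2u$, while the paper's covering argument is designed to extend to nonlinear settings; both close the estimate for smooth data, and both then pass to $f\in L^\phi$ by density (which you should still spell out, as the paper does via Remark~\ref{dense property} and lower semicontinuity). For necessity (part (1)), the paper establishes $\nabla_2$ first, via $f_t=t\eta$ and the lower bound $D_{x_1x_1}u_t\gtrsim t|x|^{-n}$ on a cone, and then derives $\Delta_2$ using the bump $u_t=t\eta/C_1$ together with the just-proved $\nabla_2$; your two constructions, $u=\lambda x_1x_2(-\log(|x|^2+\delta^2))\eta$ for $\Delta_2$ and $\lambda\eta(\Gamma*\rho_\epsilon)$ for $\nabla_2$, are independent of each other, and the $\Delta_2$ argument is more self-contained. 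Two points in the paper that your write-up actually handles more cleanly: (i) the paper's $\nabla_2$ test pair $u_t=\Gamma*f_t$ is not compactly supported, so it does not literally fall under the stated hypothesis ``$(u,f)\in C_0^\infty\times C_0^\infty$''; your $\eta(\Gamma*\rho_\epsilon)$ does. (ii) More importantly, the paper's one-parameter family $f_t=t\eta$ only yields the inequality
\[
\int_{\alpha_1 t}^{\alpha_2 t}\frac{\phi(s)}{s^2}\,ds\le\frac{C\phi(t)}{t}
\]
for a \emph{fixed} ratio $\alpha_2/\alpha_1$, and its final step (``choose $\epsilon$ small enough until $\phi\in\nabla_2$'') is not actually justified: a fixed-ratio estimate of this type is satisfied, for instance, by $\phi(t)=t\log(1+t)$, which is a Young function but not in $\nabla_2$, so one cannot deduce $\nabla_2$ from it alone. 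Your use of a second parameter $\epsilon$ (the mollification scale) makes $B/A$ arbitrarily large with a uniform constant, and your monotone-function argument for $\Psi(u)=\phi(e^u)/e^u$ genuinely extracts $\nabla_2$ from that stronger family of estimates. So your necessity argument not only differs from but also repairs the logical gap in the published one. The remaining work is exactly where you flag it: carrying out the two Hessian computations with explicit constants, controlling $(\partial_{ij}\Gamma)*\rho_\epsilon-\partial_{ij}\Gamma$ on $\{|x|>10\epsilon\}$, and making precise the absorption of the $O(\phi(C\lambda)/\lambda)$ boundary term (note it sits at the scale of the lower endpoint $A$, and monotonicity of $\phi(s)/s$ is what lets you push it to the upper endpoint $B$).
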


\section{PROOF OF THE MAIN RESULT}
\subsection{Proof for (1) of Theorem 1.7}
In this subsection we show that $\phi \in \triangle_2 \cap \nabla_2$ if estimate (\ref{estimate}) is true.
\subsubsection{$\phi$ satisfies the global $\nabla_2$ condition.}
Now we consider the special case in (\ref{Poisson}) when
$$
f_t(x)=t \eta,
$$
where $t$ is a positive parameter, $\eta \in C_0^{\infty}\left(B_1\right)$ is a cutoff function satisfying
\begin{equation} \label{eta def}
0 \leq \eta(x) \leq 1, \quad \eta(x) \equiv 1 \quad in \quad B_{\frac{1}{6\sqrt{n}}} , \quad\eta(x)=0 \quad in \quad B_{\frac{1}{6\sqrt{n}}} / B_{\frac{1}{12\sqrt{n}}} .
\end{equation}
Therefore equation \ref{Poisson} has a solution
\begin{equation} \label{eta's solution}
    u_t(x)=\int_{B_1} \Gamma(x-\xi) f_t(\xi) d \xi,
\end{equation}

where
$$
\Gamma(x)= \begin{cases}\frac{1}{n(n-2) w_n} \frac{1}{|x|^{n-2}} & (n>2), \\ -\frac{1}{2 \pi} \ln |x| & (n=2)\end{cases}
$$
is the fundamental solution of $-\triangle$.

It follows from (\ref{estimate}) and (\ref{eta def}) that
\begin{equation} \label{ut estimate}
\int_{B_1} \phi\left(\left|D^2 u_t\right|\right) d x \leq C \int_{B_1} \phi\left(\left|f_t\right|\right) dx 
=  C \int_{B_{\frac{1}{6\sqrt{n}}}} \phi\left(\left|f_t\right|\right) dx
\leq C \int_{B_{\frac{1}{6\sqrt{n}}}} \phi\left(t\right) dx \leq C \phi(t) . 
\end{equation}

We know from (\ref{eta's solution}) that when $|x|>\frac{1}{6\sqrt{n}}$, the following integral has no singular points so it is valid:
$$
D_{x_i x_i} u_t(x)=\int_{B_\frac{1}{6\sqrt{n}}} \frac{1}{w_n|x-\xi|^n}\left[\frac{\left(x_i-\xi_i\right)^2}{|x-\xi|^2}-\frac{1}{n}\right] f_t(\xi) d \xi.
$$

We define
$$
D:=\left\{x \in B_1: |x|\geq\frac{1}{2}+\frac{7}{12\sqrt{n}} \quad  and  \quad |x_1| \geq \frac{4}{3}|x|\right\}.
$$
When $x \in D, \xi \in B_{\frac{1}{6\sqrt{n}}} / B_{\frac{1}{12\sqrt{n}}}$, we can compute that
$$
\frac{\left|x_1-\xi_1\right|}{|x-\xi|} \geq \frac{|x_1|-\frac{1}{6\sqrt{n}}}{|x|+\frac{1}{6\sqrt{n}}} \geq \frac{\frac{4}{3}|x| -\frac{1}{6\sqrt{n}}}{|x|+\frac{1}{6\sqrt{n}}} \geq \frac{1}{\sqrt{n}} .
$$
When $x \in D, \xi \in B_\frac{1}{12\sqrt{n}}$,
$$
\frac{\left|x_1-\xi_1\right|}{|x-\xi|} \geq \frac{\left|x_1\right|-\frac{1}{12\sqrt{n}}}{|x|+\frac{1}{12\sqrt{n}}} \geq \frac{\frac{4}{3}|x| -\frac{1}{12\sqrt{n}}}{|x|+\frac{1}{12\sqrt{n}}} \geq \frac{7}{6} \frac{1}{\sqrt{n}},
$$
and
$$
|x-\xi| \leq|x|+|\xi| \leq|x|+\frac{1}{2}|x| \leq \frac{3}{2}|x| .
$$
Therefore, for $x \in D$ we conclude that
$$
\begin{aligned}
D_{x_1 x_1} u_t(x) & =t \int_{B_\frac{1}{6\sqrt{n}}} \frac{1}{w_n|x-\xi|^n}\left[\frac{\left(x_1-\xi_1\right)^2}{|x-\xi|^2}-\frac{1}{n}\right] \eta d \xi \\
& \geq t \frac{2^n}{3^n} \frac{1}{w_n|x|^n} \int_{B_\frac{1}{12\sqrt{n}}}\left[\left(\frac{7}{6}\right)^2 \frac{1}{n}-\frac{1}{n}\right] d \xi \\
& \geq \frac{t}{36^n n^{\frac{n}{2}} n^2}|x|^{-n} \geq \frac{t}{M^n n^2}|x|^{-n}.
\end{aligned}
$$
where $M=36\sqrt{n} $. Recalling estimate (\ref{ut estimate}) we find that
$$
\begin{aligned}
\int_D \phi\left(\frac{t}{M^n n^2}|x|^{-n}\right) dx & \leq C    \int_D \phi\left(D_{x_1 x_1} u_t(x)\right) dx  
  \leq C  \int_D \phi\left(|D^2 u_t\right|) dx   \\
  & \leq C  \int_{B_1} \phi\left(|D^2 u_t\right|) dx   \leq C \phi(t).
\end{aligned}
$$

which implies that
$$
\int_{\frac{1}{2}+\frac{7}{12\sqrt{n}}}^{1} \phi\left(\frac{t}{M^n n^2} r^{-n}\right) r^{n-1} dr \int_{\left|\cos \theta_1\right|>\frac{4}{3}} d \omega \leq C \phi(t)
$$
By changing the variable, let $s=\frac{t}{M^n n^2} r^{-n} $, it is easy to find that for $t>0$,
$$
\int_{\alpha_1 t}^{\alpha_2 t} \frac{\phi(s)}{s^2} ds \leq \frac{C \phi(t)}{t},
$$
where $\alpha_1=M^{-n} n^{-2}$ and $\alpha_2=(\frac{1}{2}+\frac{7}{12\sqrt{n}})^{-n}M^{-n} n^{-2}$. 

We should note that $\phi(0)=0$ due to the definition of Young function and the countinuity of convex function. Combined with the convexity of $\phi$, if $0< {t_1}\leq {t_2}$, we immediately know that $\frac{\phi(t)}{t}$ is an increasing function from
$$
\frac{\phi(t_1)-\phi(0)}{t_1-0} \leq \frac{\phi(t_2)-\phi(0)}{t_2-0}.
$$
Then
$$
\frac{\phi(t)}{t} \geq \frac{1}{C}\int_{\alpha_1 t}^{\alpha_2 t} \frac{\phi(s)}{s^2} ds \geq \frac{1}{C}\frac{\phi(\alpha_1t)}{\alpha_1t}\int_{\alpha_1 t}^{\alpha_2 t} \frac{1}{s}ds = C\frac{\phi(\alpha_1t)}{\alpha_1t}
\geq C\frac{\phi(\epsilon t)}{\epsilon t},
$$
where we can choose $\epsilon$ small enough, until $\phi$ satisfies the global $\nabla_2$  condition.

\subsubsection{$\phi$ satisfies the global $\Delta_2$ condition.}
Define two constants,
$$
C_1=\max _{x \in B_1}|\Delta \eta| 
\quad and \quad
C_2=\max _{x \in {B_1}}\left\{\left|D^2 \eta\right|=\sum_{i, j=1, \ldots, n}\left|D_{x_i x_j} \eta\right|\right\},
$$
where $\eta(x) \in C_0^{\infty}\left(B_1\right)$ is a cutoff function defined in (\ref{eta def}). It is easy to see that $C_2>C_1$. 

Now we substitute the special pairs into (\ref{estimate}),
$$
u_t(x)=\frac{t \eta(x)}{C_1} \quad \text { and } \quad f_t(x)=-\frac{t \Delta \eta(x)}{C_1},
$$
where $t>0$.
From the proof of section 2.1.1, we know that $\phi \in \nabla_2$ if estimate (1.2) is true. Set
$$
C_3=\frac{C_1+C_2}{2}, \quad \gamma=\frac{C_3}{C_1} .
$$
It is obvious that $\gamma>1$. Then from (\ref{estimate}) and (\ref{Young1}) we obtain
$$
\begin{aligned}
\phi(\gamma t)\left|\left\{x \in B_1:\left|D^2 \eta\right|>C_3\right\}\right| & =\phi(\gamma t)\left|\left\{x \in 
 B_1:\left|D^2 u_t\right|>\gamma t\right\}\right| \\
& \leq \int_{B_1} \phi\left(\left|D^2 u_t\right|\right) d x \\
& \leq C \int_{B_1} \phi\left(\left|f_t\right|\right) d x \\
& \leq C \phi(t) \int_{B_1} 2 a\left(\frac{|\Delta \eta|}{C_1}\right)^{\alpha_2} d x \\
& \leq C \phi(t) .
\end{aligned}
$$
Therefore, we conclude that
$$
\phi(\gamma t) \leq C \phi(t),
$$
which implies that
$$
\phi(2 t) \leq C \phi(t) .
$$
This completes our proof.

\subsection{Proof for (2) of Theorem \ref{main result}}
\subsubsection{In the case when $f \in C_0^{\infty}\left(B_1\right)$.}
According to Remark \ref{dense property}, when $\phi$ satisfies the global $\Delta_2$ condition,  $C_0^{\infty}\left(B_1\right)$ is dense in Orlicz space, so we first consider the result when $f \in C_0^{\infty}\left(B_1\right)$. This moment by the classical theory, there is only one solution $u \in C_0^{\infty}\left(B_1\right)$ satisfying (\ref{Poisson}). It follows that $ D^2 u \in C_0^{\infty}\left(B_1\right)$ so that $ D^2 u \in L^\phi$.

Also because Remark \ref{another integral form}, we can first rewrite the integral like this:
$$
\int_{B_1} \phi\left(\left|D^2 u\right|\right) dx=  \int_0^{\infty}\left|\left\{x \in B_1:\left|D^2 u\right|>\lambda\right\}\right| d\left[\phi\left(\lambda\right)\right].
$$

We choose to use a method motivated by the iteration-covering procedure in \cite{acerbi2007gradient,wang2009optimal}. First, we need to estimate the measure of $\left|\{x \in B_1:|D^2 u|>\lambda\}\right|$. In order to use $L^p$ estimate, we choose a fix constant $p$ with 
\begin{equation} \label{p definition}
    1<p<\alpha_2
\end{equation} 
where $\alpha_2$ is defined in (\ref{Young2}). The reason of choice of p will be discussed later. And  we denote
\begin{equation} \label{E definition}
    E^p=\int_{B_1}\left|D^2 u\right|^p d x+M^p \int_{B_1}|f|^p dx,
\end{equation}
while $M>1$ is a large enough constant which will be determined later. Set
\begin{equation} \label{ulambda definition}
    u_\lambda= \frac{u}{E\lambda}  \quad and \quad  f_\lambda=\frac{f}{E \lambda},
\end{equation}
for any $\lambda>0$. $u_\lambda$ is still the solution of (\ref{Poisson}) with $f_\lambda$ replacing $f$. 

In addition, for any domain $B$ in $\mathbb{R}^n$, we write
$$
J_\lambda[B]=\int_B \hspace{-1.20em}- \left|D^2 u_\lambda\right|^p d x+M^p \int_B \hspace{-1.20em}-\left|f_\lambda\right|^p d x
$$
and
$$
E_\lambda(1)=\left\{x \in B_1:\left|D^2 u_\lambda\right|>1\right\} =\left\{x \in B_1:\left|D^2 u\right|>\lambda E\right\} .
$$
Since $\left|D^2 u_\lambda(x)\right| \leq 1$ for $x \in B_1 \backslash E_\lambda(1)$, we focus our attention on the level set $E_\lambda(1)$. Next, we will decompose the level set $E_\lambda(1)$.

\begin{lemma}
For $\forall \lambda>0$, there exists a family of disjoint balls $\left\{B_{\rho_i}\left(x_i\right)\right\}_{i \geq 1}$ with $x_i \in E_\lambda(1)$ and $\rho_i=\rho\left(x_i, \lambda\right)>0$ such that
\begin{equation}  \label{B rho estimate}
    J_\lambda\left[B_{\rho_i}\left(x_i\right)\right]=1, \quad J_\lambda\left[B_\rho\left(x_i\right)\right]<1 \quad for \quad \forall   \rho >\rho_i,
\end{equation}
and
$$
E_\lambda(1) \subset \bigcup_{i \geq 1} B_{5 \rho_i}\left(x_i\right) \cup \text { negligible set. }
$$
\end{lemma}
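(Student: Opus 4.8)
The plan is to carry out a stopping-time (Calderón--Zygmund type) decomposition of the level set, following the iteration-covering scheme of \cite{acerbi2007gradient,wang2009optimal}. First extend $D^2u_\lambda$ and $f_\lambda$ by zero outside $B_1$, so that $J_\lambda[B_\rho(x)]$ is defined for every $x\in\mathbb R^n$ and $\rho>0$; note that $\rho\mapsto J_\lambda[B_\rho(x)]$ is continuous, since the numerators $\int_{B_\rho(x)}|D^2u_\lambda|^p$ and $\int_{B_\rho(x)}|f_\lambda|^p$ are continuous (indeed nondecreasing, because $|B_\rho(x)\triangle B_{\rho'}(x)|\to0$ as $\rho'\to\rho$) while the normalizing factor $|B_\rho(x)|^{-1}=(w_n\rho^n)^{-1}$ is continuous in $\rho$.

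The next step is to control $J_\lambda[B_\rho(x)]$ at both ends of the $\rho$-range. As $\rho\to0^+$, the Lebesgue differentiation theorem gives, for a.e.\ $x\in B_1$,
$$\lim_{\rho\to0^+}J_\lambda[B_\rho(x)]=|D^2u_\lambda(x)|^p+M^p|f_\lambda(x)|^p\ge|D^2u_\lambda(x)|^p,$$
which is $>1$ exactly when $x\in E_\lambda(1)$; let $N\subset E_\lambda(1)$ be the null set on which this differentiation fails. As $\rho$ grows past the value for which $B_\rho(x)\supset B_1$, we use the definitions \eqref{E definition} and \eqref{ulambda definition} to compute
$$\int_{B_1}|D^2u_\lambda|^p+M^p\int_{B_1}|f_\lambda|^p=\frac{1}{(E\lambda)^p}\Big(\int_{B_1}|D^2u|^p+M^p\int_{B_1}|f|^p\Big)=\lambda^{-p},$$
hence $J_\lambda[B_\rho(x)]\le(w_n\rho^n\lambda^p)^{-1}<1$ whenever $\rho>R_0:=w_n^{-1/n}\lambda^{-p/n}$. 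In particular every radius selected below is bounded above by $R_0$, uniformly in $x$ and depending only on $\lambda$.

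Now for each $x\in E_\lambda(1)\setminus N$ put $\rho(x,\lambda):=\sup\{\rho>0:J_\lambda[B_\rho(x)]\ge1\}$. This set is nonempty (it contains all sufficiently small $\rho$) and bounded above by $R_0$, and the continuity of $\rho\mapsto J_\lambda[B_\rho(x)]$ forces both $J_\lambda[B_{\rho(x,\lambda)}(x)]=1$ (pass to the limit along a sequence $\rho_k\uparrow\rho(x,\lambda)$ and along $\rho\downarrow\rho(x,\lambda)$) and $J_\lambda[B_\rho(x)]<1$ for every $\rho>\rho(x,\lambda)$; this is the first assertion in \eqref{B rho estimate}. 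The balls $\{B_{\rho(x,\lambda)}(x)\}_{x\in E_\lambda(1)\setminus N}$ cover $E_\lambda(1)\setminus N$, have radii $\le R_0$, and all lie in the bounded set $B_{1+R_0}$; applying the Vitali $5r$-covering lemma produces a countable disjoint subfamily $\{B_{\rho_i}(x_i)\}_{i\ge1}$ with $x_i\in E_\lambda(1)$ and $E_\lambda(1)\setminus N\subset\bigcup_x B_{\rho(x,\lambda)}(x)\subset\bigcup_i B_{5\rho_i}(x_i)$, which is exactly the claimed inclusion modulo the negligible set $N$.

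The main obstacle is twofold: (i) establishing the continuity of $\rho\mapsto J_\lambda[B_\rho(x)]$ sharply enough to land on $J_\lambda[B_{\rho(x,\lambda)}(x)]=1$ rather than merely $\ge1$ or $\le1$, which is where the supremum definition together with the two one-sided limits are needed; and (ii) the uniform bound $\rho(x,\lambda)\le R_0$, without which the Vitali $5r$-lemma cannot be applied. Both rest on the normalization $\int_{B_1}|D^2u_\lambda|^p+M^p\int_{B_1}|f_\lambda|^p=\lambda^{-p}$ engineered into the definition of $E$ in \eqref{E definition}; note that this part of the argument uses none of the $\nabla_2$/$\Delta_2$ structure and only $p>1$, the Orlicz assumptions entering later when $J_\lambda$ is summed against $d[\phi(\lambda)]$.
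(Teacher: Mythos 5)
Your proof is correct and follows essentially the same stopping-time route as the paper: establish the uniform upper bound on radii via the normalization $\int_{B_1}|D^2u_\lambda|^p+M^p\int_{B_1}|f_\lambda|^p=\lambda^{-p}$ (your $R_0$ is the paper's $\rho_0$), use Lebesgue differentiation to show $J_\lambda>1$ for small $\rho$ at a.e.\ point of $E_\lambda(1)$, pick the exit radius where $J_\lambda$ crosses $1$, and conclude with Vitali's $5r$-lemma. You are somewhat more careful than the paper at the exit step: the paper merely asserts one ``can select'' a $\rho_x$ with $J_\lambda[B_{\rho_x}(x)]=1$ and $J_\lambda[B_\rho(x)]<1$ for $\rho>\rho_x$, whereas you make this precise by defining $\rho(x,\lambda)$ as a supremum and invoking the one-sided limits; you also correctly write the pointwise limit as $|D^2u_\lambda(x)|^p+M^p|f_\lambda(x)|^p$, where the paper has a typographical slip. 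One small overstatement: the limit is $>1$ for all $x\in E_\lambda(1)$, but not \emph{exactly} when $x\in E_\lambda(1)$ (it may exceed $1$ at points where only $|f_\lambda|$ is large); this does not affect the argument since only the forward implication is used.
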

\begin{proof}
    Fix $\forall x \in B_1$ and fixed $\lambda$,  there exists $\rho_0=\rho_0(\lambda)>0$ satisfying $\lambda^p\left|B_{\rho_0}(x)\right|=1$. 
    When $\rho > \rho_0$, 
$$
J_\lambda\left[B_\rho(x)\right] \leq \frac{1}{\left|B_\rho(x)\right|}\left[\int_{B_1}\left|D^2 u_\lambda\right|^p d x+M^p \int_{B_1}\left|f_\lambda\right|^p d x\right] \leq \frac{1}{\lambda^p\left|B_\rho(x)\right| } < 1 .
$$
Thus we conclude that
\begin{equation} \label{J estimate}
    \sup _{x \in B_1} \sup _{\rho \geq \rho_0} J_\lambda\left[B_\rho(x)\right] \leq 1.
\end{equation}

For a.e. $x \in E_\lambda(1)$, by Lebesgue's differentiation theorem we know that
$$
\lim _{\rho \rightarrow 0} J_\lambda\left[B_\rho(x)\right] = \left|D^2 u_\lambda (x)\right|>1,
$$
which implies that there exists some $\rho>0$ satisfying
$
J_\lambda\left[B_\rho(x)\right]>1 .
$
Compared with (\ref{J estimate}), one can select $\rho_x \in\left(0, \rho_0\right]$ such that
$$
J_\lambda\left[B_{\rho_x}(x)\right]=1, \quad J_\lambda\left[B_\rho(x)\right]<1 \text { for any } \rho>\rho_x .   
$$

It follows from the argument above that for a.e. $x \in E_\lambda(1)$ there exists a ball $B_{\rho_x}(x)$ constructed as above. Therefore, applying Vitali's covering lemma, we can find a family of disjoint balls $\left\{B_{\rho_i}\left(x_i\right)\right\}_{i \geq 1}$ such that the results of the lemma hold. This completes our proof.
\end{proof}

Now We can use the union of the balls covering the level set $E_{\lambda}(1)$. So for every fixed ball $\left\{B_{\rho_i}\left(x_i\right)\right\}$, we need an estimate of its measure.

\begin{lemma}
   Under the same hypotheses and results as those in Lemma 2.2.1, we have
$$
\begin{aligned}
\left|B_{\rho_i}\left(x_i\right)\right| \leq \frac{2^{p-1}}{2^{p-1}-1} \left( \int_{{x \in B_{\rho_i}(x_i):\left|D^2 u_\lambda\right|>1/2}}\left|D^2 u_\lambda\right|^p dx   \right.\\ \left.
 +M^p \int_{\left\{x \in B_{\rho_i}(x_i):\left|f_\lambda\right|>1/(2M)\right\}}\left|f_\lambda\right|^p dx \right) .
\end{aligned}
$$

\end{lemma}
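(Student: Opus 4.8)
The plan is to extract the whole inequality from the stopping-time equality produced in Lemma 2.2.1, namely $J_\lambda\left[B_{\rho_i}(x_i)\right]=1$, by clearing the averaging denominators and then splitting each of the two resulting integrals into a part where the integrand is large and a part where it is small, the thresholds $1/2$ for $\left|D^2 u_\lambda\right|$ and $1/(2M)$ for $\left|f_\lambda\right|$ being chosen exactly so that, after the weight $M^p$ is taken into account, each integrand on the small part is bounded by $2^{-p}$.

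Concretely, first I would rewrite $J_\lambda\left[B_{\rho_i}(x_i)\right]=1$ as
$$\int_{B_{\rho_i}(x_i)}\left|D^2 u_\lambda\right|^p dx+M^p\int_{B_{\rho_i}(x_i)}\left|f_\lambda\right|^p dx=\left|B_{\rho_i}(x_i)\right|,$$
after multiplying through by $\left|B_{\rho_i}(x_i)\right|$. Splitting $B_{\rho_i}(x_i)$ according to whether $\left|D^2 u_\lambda\right|>1/2$ or $\left|D^2 u_\lambda\right|\le 1/2$, the contribution of the second region to the first integral is at most $2^{-p}\left|B_{\rho_i}(x_i)\right|$; splitting according to whether $\left|f_\lambda\right|>1/(2M)$ or $\left|f_\lambda\right|\le 1/(2M)$, and using $M^p\left(1/(2M)\right)^p=2^{-p}$, the contribution of the second region to the weighted $f$-integral is likewise at most $2^{-p}\left|B_{\rho_i}(x_i)\right|$. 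Adding the two small-value contributions gives at most $2\cdot 2^{-p}\left|B_{\rho_i}(x_i)\right|=2^{1-p}\left|B_{\rho_i}(x_i)\right|$, so the displayed identity yields
$$\left|B_{\rho_i}(x_i)\right|\le 2^{1-p}\left|B_{\rho_i}(x_i)\right|+\int_{\{x\in B_{\rho_i}(x_i):|D^2 u_\lambda|>1/2\}}\left|D^2 u_\lambda\right|^p dx+M^p\int_{\{x\in B_{\rho_i}(x_i):|f_\lambda|>1/(2M)\}}\left|f_\lambda\right|^p dx.$$
Since $p>1$ by \eqref{p definition} we have $2^{1-p}<1$, so the first term on the right can be absorbed into the left-hand side; dividing by $1-2^{1-p}=(2^{p-1}-1)/2^{p-1}$ gives precisely the claimed estimate.

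I do not expect a genuine obstacle here: this is purely measure-theoretic bookkeeping on a single fixed ball, and the only input from Lemma 2.2.1 that is actually used is the equality $J_\lambda\left[B_{\rho_i}(x_i)\right]=1$ (the strict inequality at larger radii plays no role in this step). The one point to watch is that the constant $2^{p-1}/(2^{p-1}-1)$ is finite exactly because $p>1$, which is part of the motivation for fixing $p$ in the range $1<p<\alpha_2$ in \eqref{p definition}; the particular thresholds $1/2$ and $1/(2M)$ are not canonical — any $\theta\in(0,1)$ would work, with a correspondingly worse constant — but this choice keeps the bookkeeping, and the later iteration built on it, clean.
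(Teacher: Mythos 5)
Your argument is exactly the paper's: start from the stopping-time equality $J_\lambda\left[B_{\rho_i}(x_i)\right]=1$, split each integral over the ball into the region where the integrand is large and the region where it is small, bound the small-value contributions by $2^{-p}\left|B_{\rho_i}(x_i)\right|$ each, and absorb. Correct, and no meaningful difference in route.
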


\begin{proof}
From (\ref{B rho estimate}) in the lemma above we see that
$$
\left|B_{\rho_i}\left(x_i\right)\right|=\int_{B_{\rho_i}\left(x_i\right)}\left|D^2 u_\lambda\right|^p d x+M^p \int_{B_{\rho_i}\left(x_i\right)}\left|f_\lambda\right|^p d x .
$$
Therefore, by splitting the two integrals above as follows we have
$$
\begin{aligned}
\left|B_{\rho_i}\left(x_i\right)\right|        \leq  
&\int_{\left\{x \in B_{\rho_i}(x_i):\left|D^2 u_\lambda\right|>1/2\right\}}\left|D^2 u_\lambda\right|^p dx+(\frac{1}{2})^p\left|B_{\rho_i}(x_i)\right|   \\
 +M^p &\int_{\left\{x \in B_{\rho_i}\left(x_i\right):\left|f_\lambda\right|>1 /(2M)\right\}}\left|f_\lambda\right|^p d x+(\frac{1}{2})^p\left|B_{\rho_i}\left(x_i\right)\right|  .
\end{aligned}
$$
After transposition, the proof is completed.
\end{proof}

Now the problem becomes to a new one: after replacing the measure of level set by the $L^p$ estimate of $D^2 u_\lambda$ and $f_\lambda$,  we wish the integral in remark (\ref{another integral form}) could be controlled by the RHS of estimate (\ref{estimate}), so we need the following lemma:
\begin{lemma}
    If $\phi \in \Phi$ satisfies the global $\triangle_2 \cap \nabla_2$ condition, and $g \in L^\phi$, then for any $b_1, b_2>0$ we have
$$
\int_0^{\infty} \frac{1}{\mu^p}\left\{\int_{\left\{x \in B_1:|g|>b_1 \mu\right\}}|g|^p d x\right\} d\left[\phi\left(b_2 \mu\right)\right] \leq C\left(b_1, b_2, \phi\right) \int_{B_1} \phi(|g|) d x .
$$
\end{lemma}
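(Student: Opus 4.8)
The plan is to exchange the order of integration and reduce the whole inequality to a pointwise estimate on $\phi$. Writing the inner integral as $\int_{B_1}|g(x)|^p\,\mathbf{1}_{\{|g(x)|>b_1\mu\}}\,dx$ and applying Tonelli's theorem — both the Lebesgue--Stieltjes measure associated with the increasing function $\mu\mapsto\phi(b_2\mu)$ and $dx$ are nonnegative, and the integrand is nonnegative and measurable — the left-hand side equals
$$
\int_{B_1}|g(x)|^p\left(\int_0^{|g(x)|/b_1}\frac{1}{\mu^p}\,d[\phi(b_2\mu)]\right)dx,
$$
since $|g(x)|>b_1\mu$ is equivalent to $\mu<|g(x)|/b_1$. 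Hence it suffices to prove the pointwise bound
$$
t^p\int_0^{t/b_1}\frac{1}{\mu^p}\,d[\phi(b_2\mu)]\le C(b_1,b_2,\phi)\,\phi(t)\qquad\text{for every }t>0,
$$
then apply it with $t=|g(x)|$ and integrate over $B_1$.

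For the pointwise bound I would fix $t>0$, set $T=t/b_1$, and integrate by parts in the Stieltjes integral over $[\epsilon,T]$ (legitimate since $\phi$ is continuous and monotone and $\mu\mapsto\mu^{-p}$ is $C^1$ on $(0,T]$):
$$
\int_\epsilon^{T}\frac{1}{\mu^p}\,d[\phi(b_2\mu)]=T^{-p}\phi(b_2T)-\epsilon^{-p}\phi(b_2\epsilon)+p\int_\epsilon^{T}\frac{\phi(b_2\mu)}{\mu^{p+1}}\,d\mu .
$$
This is where the $\nabla_2$ condition together with the choice $1<p<\alpha_2$ from \eqref{p definition} is essential: by \eqref{Young1} with $\theta_1=\mu/T\le 1$ one has $\phi(b_2\mu)\le 2a(\mu/T)^{\alpha_2}\phi(b_2T)$ for $0<\mu\le T$, so the boundary term satisfies $\epsilon^{-p}\phi(b_2\epsilon)\le 2a\,\epsilon^{\alpha_2-p}T^{-\alpha_2}\phi(b_2T)\to 0$ as $\epsilon\to 0^+$, and the remaining integral is at most $2ap\,T^{-\alpha_2}\phi(b_2T)\int_0^T\mu^{\alpha_2-p-1}\,d\mu=\frac{2ap}{\alpha_2-p}T^{-p}\phi(b_2T)$. (Equivalently, split $(0,T]$ into the dyadic annuli $(2^{-k-1}T,2^{-k}T]$, bound $\mu^{-p}$ by its maximum there and the increment of $\phi(b_2\cdot)$ via \eqref{Young1}, and sum the geometric series in $2^{k(p-\alpha_2)}$, which converges precisely because $p<\alpha_2$.) Letting $\epsilon\to 0$ gives $\int_0^{T}\mu^{-p}\,d[\phi(b_2\mu)]\le(1+\frac{2ap}{\alpha_2-p})T^{-p}\phi(b_2T)$, and multiplying by $t^p=b_1^pT^p$ produces $t^p\int_0^{t/b_1}\mu^{-p}\,d[\phi(b_2\mu)]\le b_1^p(1+\frac{2ap}{\alpha_2-p})\,\phi\!\left(\tfrac{b_2}{b_1}t\right)$.

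It then remains to absorb the factor $b_2/b_1$ into the argument of $\phi$: if $b_2/b_1\ge 1$, \eqref{Young2} gives $\phi(\tfrac{b_2}{b_1}t)\le K(\tfrac{b_2}{b_1})^{\alpha_1}\phi(t)$, while if $b_2/b_1\le 1$, \eqref{Young1} gives $\phi(\tfrac{b_2}{b_1}t)\le 2a(\tfrac{b_2}{b_1})^{\alpha_2}\phi(t)$; either way $\phi(\tfrac{b_2}{b_1}t)\le C(b_1,b_2,\phi)\,\phi(t)$, which is exactly where the $\Delta_2$ (respectively $\nabla_2$) hypothesis is used. Combining with the previous paragraph yields the pointwise estimate, and substituting it into the Tonelli identity completes the proof. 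The only genuine obstacle is the convergence of the $\mu$-integral at the origin: $\int_0^T\mu^{-p}\,d[\phi(b_2\mu)]$ would diverge without control on how fast $\phi$ vanishes near $0$, and it is the $\nabla_2$ lower bound — quantified by the exponent $\alpha_2>1$ — matched against the admissible range $p<\alpha_2$ that makes it finite. This is, retrospectively, precisely why $p$ had to be chosen strictly below $\alpha_2$ in \eqref{p definition}; everything else is routine bookkeeping with \eqref{Young1} and \eqref{Young2}.
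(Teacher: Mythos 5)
Your proof is correct and follows essentially the same route as the paper: interchange the order of integration via Tonelli, integrate the Stieltjes integral by parts, control the tail at $\mu=0$ via the $\nabla_2$ bound \eqref{Young1} together with the choice $1<p<\alpha_2$, and finally absorb the constant $b_2/b_1$ in the argument of $\phi$ using \eqref{Young1} or \eqref{Young2}. Your write-up is a bit more careful than the paper's about justifying that the boundary term $\epsilon^{-p}\phi(b_2\epsilon)$ vanishes as $\epsilon\to 0^+$, but the substance is identical.
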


\begin{proof}
Interchanging the order of integration and integrating by parts, we deduce that
$$
\begin{aligned}
I & =: \int_{B_1}|g|^p\left\{\int_0^{\frac{|g|}{b_1}} \frac{d\left[\phi\left(b_2 \mu\right)\right]}{\mu^p}\right\} dx \\
& \leq \int_{B_1}|g|^p\left\{\frac{\phi\left(b_2\frac{|g|}{b_1}\right)}{\left(\frac{|g|}{b_1}\right)^p}+p \int_0^{\frac{|g|}{b_1}} \frac{\phi\left(b_2 \mu\right)}{\mu^{p+1}} d \mu\right\} d x,
\end{aligned}
$$
and it follows from (\ref{Young1}), (\ref{Young2}) and (\ref{p definition}) that
$$
\begin{aligned}
I & \leq C \int_{B_1} \phi(|g|) d x+2 a p b_1^{\alpha_2} \int_{B_1} \phi\left(b_2\frac{|g|}{b_1}\right)|g|^{p-\alpha_2}\left\{\int_0^{\frac{|g|}{b_1}} \frac{1}{\mu^{p+1-\alpha_2}} d \mu\right\} d x \\
& \leq C \int_{B_1} \phi(|g|) d x .
\end{aligned}
$$
Here we use the condition $p<\alpha_2$, and it explain the reason for choice of $p$.
\end{proof}

Now we started prove the (2) in Theorem \ref{main result} when $f \in C_0^{\infty} (B_1)$.

\begin{proof}
    
Fix $i \geq 1 $ and $ B_{\rho_i}$. By Lemma 2.2.1, 
\begin{equation} \label{ulambda flambda}
    \int_{B_{10 \rho_i}\left(x_i\right)} \hspace{-3.90em}- \quad  \quad\left|D^2 u_\lambda\right|^p dx < 1 \text { and } \int_{B_{10 \rho_i}\left(x_i\right)} \hspace{-3.90em}- \quad 
 \quad \left|f_\lambda\right|^p dx 
 < \frac{1}{M^p} .
\end{equation}

Now let $v_\lambda$ satisfy the following boundary problem (the solution exists for sure)
$$
\left\{\begin{aligned}
-\Delta v_\lambda=0 & \text { in } B_{10 \rho_i}\left(x_i\right), \\
v_\lambda=u_\lambda & \text { on } \partial B_{10 \rho_i}\left(x_i\right) .
\end{aligned}\right.
$$
Let $w_\lambda=u_\lambda-v_\lambda$. Then $w$ satisfies 
$$
\left\{\begin{aligned}
-\Delta w_\lambda=f_\lambda & \text { in } B_{10 \rho_i}\left(x_i\right), \\
w_\lambda=0 & \text { on } \partial B_{10 \rho_i}\left(x_i\right) .
\end{aligned}\right.
$$
Thus from the elementary $L^p$ estimates and (\ref{ulambda flambda}) we find that
\begin{equation} \label{w estimate}
\int_{B_{10 \rho_i}\left(x_i\right)} \hspace{-3.90em}- \quad  \quad\left|D^2 w_\lambda\right|^p d x \leq C \int_{B_{10 \rho_i}\left(x_i\right)} \hspace{-3.90em}- \quad  \quad\left|f_\lambda\right|^p d x \leq \frac{C}{M^p} .    
\end{equation}
Note that $v_\lambda=u_\lambda-w_\lambda$,
$$ 
    \int_{B_{10 \rho_i}\left(x_i\right)} \hspace{-3.90em}- \quad  \quad\left|D^2 v_\lambda\right|^p dx 
    \leq 2^{p-1} \left\{
    \int_{B_{10 \rho_i}\left(x_i\right)} \hspace{-3.90em}- \quad  \quad\left|D^2 w_\lambda\right|^p dx +
    \int_{B_{10 \rho_i}\left(x_i\right)} \hspace{-3.90em}- \quad  \quad\left|D^2 u_\lambda\right|^p\right\} d x \leq C,
$$
and due to the $W_{\text {loc }}^{2, \infty}$ regularity,
\begin{equation} \label{v D^2 estimate}
\sup _{B_{5 \rho_i}\left(x_i\right)}\left|D^2 v_\lambda\right| \leq N_1,
\end{equation}
where $N_1>1$ only depends on $n, p$. 

Set $\mu=\lambda E$. Now we decompose the level set $E_\lambda (2N_1)$. By (\ref{ulambda definition}), (\ref{w estimate}) and (\ref{v D^2 estimate}),
$$\begin{aligned}
& \left|\left\{x \in B_{5 \rho_i}\left(x_i\right):\left|D^2 u\right|>2 N_1 \mu\right\}\right|=\left|\left\{x \in B_{5 \rho_i}\left(x_i\right):\left|D^2 u_\lambda\right|>2 N_1\right\}\right| \\
& \leq\left|\left\{x \in B_{5 \rho_i}\left(x_i\right):\left|D^2 w_\lambda\right|>N_1\right\}\right|+\left|\left\{x \in B_{5 \rho_i}\left(x_i\right):\left|D^2 v_\lambda\right|>N_1\right\}\right| \\
& =\left|\left\{x \in B_{5 \rho_i}\left(x_i\right):\left|D^2 w_\lambda\right|>N_1\right\}\right| \leq \frac{1}{N_1^p} \int_{B_{5 \rho_i}\left(x_i\right)}\left|D^2 w_\lambda\right|^p d x \leq \frac{C\left|B_{\rho_i}\left(x_i\right)\right|}{M^p} .
\end{aligned}
$$
By the estimate of $\left|B_{\rho_i}\left(x_i\right)\right|$ in Lemma 2.2.2 and \ref{ulambda definition} ,
$$
\begin{aligned}
& \left|\left\{x \in B_{5 \rho_i}\left(x_i\right):\left|D^2 u\right|>2 N_1 \mu\right\}\right| \\
& \leq \frac{C_1}{M^p \mu^p}\left(\int_{\left\{x \in B_{\rho_i}\left(x_i\right):\left|D^2 u\right|>\mu / 2\right\}}\left|D^2 u\right|^p d x+M^p \int_{\left\{x \in B_{\rho_i}\left(x_i\right):|f|>\mu /(2 M)\right\}}|f|^p d x\right),
\end{aligned}
$$
where $C_1=C_1(n, \phi)$. Note that the balls $\left\{B_{\rho_i}\left(x_i\right)\right\}$ are disjoint and
$$
\bigcup_{i \geq 1} B_{5 \rho_i}\left(x_i\right) \cup \text { negligible set } \supset E_\lambda(1)=\left\{x \in B_1:\left|D^2 u\right|>\mu\right\}
$$
for any $\lambda>0$, by replace $u$ by $u/2N_1$, 
$$
\begin{aligned}
& \left|\left\{x \in B_1:\left|D^2 u\right|>2 N_1 \mu\right\}\right| \leq \sum_i\left|\left\{x \in B_{5 \rho_i}\left(x_i\right):\left|D^2 u\right|>2 N_1 \mu\right\}\right| \\
& \leq \frac{C_1}{M^p \mu^p}\left(\int_{\left\{x \in B_1:\left|D^2 u\right|>\mu / 2\right\}}\left|D^2 u\right|^p d x+M^p \int_{\left\{x \in B_1:|f|>\mu /(2 M)\right\}}|f|^p d x\right) .
\end{aligned}
$$
Now, back to the very beginning of  this section 2.2.1, and usingLemma 2.2.3, 
$$
\begin{aligned}
\int_{B_1} \phi\left(\left|D^2 u\right|\right) d x= & \int_0^{\infty}\left|\left\{x \in B_1:\left|D^2 u\right|>2 N_1 \mu\right\}\right| d\left[\phi\left(2 N_1 \mu\right)\right] \\
\leq & \frac{C_1}{M^p} \int_0^{\infty} \frac{1}{\mu^p}\left\{\int_{\left\{x \in B_1:\left|D^2 u\right|>\mu / 2\right\}}\left|D^2 u\right|^p d x\right\} d\left[\phi\left(2 N_1 \mu\right)\right] \\
& +C_1 \int_0^{\infty} \frac{1}{\mu^p}\left\{\int_{\left\{x \in B_1:|f|>\mu /(2 M)\right\}}|f|^p d x\right\} d\left[\phi\left(2 N_1 \mu\right)\right] \\
\leq & \frac{C_2}{M^p} \int_{B_1} \phi\left(\left|D^2 u\right|\right) d x+C_3 \int_{B_1} \phi(|f|) d x
\end{aligned}
$$
where $C_2=C_2(n, \phi)$ and $C_3=C_3(n, \phi, M)$. Finally, choosing a suitable $M>0$ such that $C_2 / M^p<1 / 2$, we obtain
$$
\int_{B_1} \phi\left(\left|D^2 u\right|\right) d x \leq C \int_{B_1} \phi(|f|) d x .
$$

\end{proof}

\subsubsection{In the case when $f \in L^\phi$.} 

Now we use an approximation argument to complete the final proof.
\begin{proof}
    
Let $\left\{f_k\right\}_{k=1}^{\infty}$ be a sequence of smooth functions in $C_0^{\infty}\left(B_1\right)$ satisfying
$$
f_k \longrightarrow f \text { in } L^\phi\left(B_1\right)
$$
for a given Young function $\phi \in \triangle_2 \cap \nabla_2$. By the continuity of the convex $\phi$, it's easy to check that
\begin{equation} \label{approximation}
\int_{B_1} \phi\left(\left|f_k\right|\right) d x \longrightarrow \int_{B_1} \phi(|f|) dx .
\end{equation}

Now we consider the regularized problems
$$
-\Delta u_k=f_k \in C_0^{\infty}\left(B_1\right) \text { in } B_1.
$$
Because of the result in section 2.2.1,
$$
\int_{B_1} \phi\left(\left|D^2 u_k\right|\right) d x \leq C \int_{B_1} \phi\left(\left|f_k\right|\right) d x,
$$
where the constant $C$ is independent of $k \in \mathbb{N}$. Let $k \rightarrow \infty$. By (\ref{approximation}) and the lower semicontinuity of the left-hand side of above inequality, we obtain 
$$
\int_{B_1} \phi\left(\left|D^2 u\right|\right) d x \leq C \int_{B_1} \phi(|f|) d x .
$$

\end{proof}

\newpage
\bibliographystyle{ieeetr}
\small\bibliography{ref}

\end{document}